\newtheorem{theorem}{Theorem}[section]
\newtheorem{lemma}[theorem]{Lemma}
\newtheorem{proposition}[theorem]{Proposition}
\theoremstyle{definition}
\newtheorem{definition}[theorem]{Definition}
\newtheorem{example}[theorem]{Example}
\theoremstyle{remark}
\numberwithin{equation}{section}
\DeclareMathOperator{\bbH}{\mathbb{H}}
\DeclareMathOperator{\bbI}{\mathbb{I}}
\DeclareMathOperator{\bbN}{\mathbb{N}}
\DeclareMathOperator{\bbR}{\mathbb{R}}
\DeclareMathOperator{\A}{\mathcal{A}}
\DeclareMathOperator{\B}{\mathcal{B}}
\DeclareMathOperator{\card}{\operatorname{card}}
\DeclareMathOperator{\D}{\mathcal{D}}
\DeclareMathOperator{\diam}{\operatorname{diam}}
\DeclareMathOperator{\F}{\mathcal{F}}
\DeclareMathOperator{\id}{\operatorname{id}}
\DeclareMathOperator{\J}{\mathcal{J}}
\renewcommand{\L}{\mathcal{L}}
\DeclareMathOperator{\X}{\mathcal{X}}
\begin{document}

\title[A Generalization of Whyburn's Theorem]{A Generalization of Whyburn's Theorem, and Aperiodicity for Abelian $C^*$-Inclusions}

\author[Vrej Zarikian]{Vrej Zarikian}
\address{Department of Mathematics, U. S. Naval Academy, Annapolis, MD 21402, USA.}
\email{zarikian@usna.edu}

\subjclass[2010]{Primary 54E40; Secondary 46J10.}


\keywords{irreducible map, almost one-to-one map, quasicontinuous selection, abelian $C^*$-algebra, unique pseudo-expectation, almost extension property, aperiodicity}

\date{Received: xxxxxx; Revised: yyyyyy; Accepted: zzzzzz.}

\begin{abstract}
Let $j:Y \to X$ be a continuous surjection of compact metric spaces. Whyburn proved that $j$ is \emph{irreducible}, meaning that $j(F) \subsetneq X$ for any proper closed subset $F \subsetneq Y$, if and only if $j$ is \emph{almost one-to-one}, in the sense that
\[
    \overline{\{y \in Y: j^{-1}(j(y)) = y\}} = Y.
\]
In this note we prove the following generalization: There exists a unique minimal closed set $K \subseteq Y$ such that $j(K) = X$ if and only if
\[
    \overline{\{x \in X: \card(j^{-1}(x)) = 1\}} = X.
\]
Translated to the language of operator algebras, this says that if $\A \subseteq \B$ is a unital inclusion of separable abelian $C^*$-algebras, then there exists a unique \emph{pseudo-expectation} (in the sense of Pitts) if and only if the \emph{almost extension property} of Nagy-Reznikoff holds. More generally, we prove that a unital inclusion of (not necessarily separable) abelian $C^*$-algebras has a unique pseudo-expectation if and only if it is \emph{aperiodic} (in the sense of Kwa\'{s}niewski-Meyer).
\end{abstract}
\maketitle

\section{Introduction}

Let $j:Y \to X$ be a continuous surjection of compact Hausdorff spaces. Following \cite{BOT2006}, we say that $j$ is \emph{almost one-to-one} if
\[
    \overline{\{y \in Y: j^{-1}(j(y)) = y\}} = Y.
\]
We say that $j$ is \emph{irreducible} if $j(F) \subsetneq X$ for all proper closed subsets $F \subsetneq Y$. It is easy to see that
\[
    \text{$j$ one-to-one} \implies \text{$j$ almost one-to-one} \implies \text{$j$ irreducible}.
\]
In general, an irreducible map need not be almost one-to-one (see Example \ref{split interval} below). If, however, $Y$ is metrizable, then Whyburn's Theorem \cite[Theorem 2]{Whyburn1939} states that irreducible maps are necessarily almost one-to-one, so that the two notions coincide.\\

Irreducible maps and almost one-to-one maps arise naturally in topological dynamics. Indeed, a minimal self-map of a compact Hausdorff space must be irreducible \cite[Lemma 2.1]{KST2001}. Thus a minimal self-map of a compact metric space must be almost one-to-one, by Whyburn's Theorem (see also \cite[Theorem 2.7]{KST2001}).\\

In Theorem \ref{generalized Whyburn} below, we generalize Whyburn's Theorem, proving that for a continuous surjection $j:Y \to X$ of compact metric spaces, there exists a unique minimal closed set $K \subseteq Y$ such that $j(K) = X$ if and only if
\[
    \overline{\{x \in X: \card(j^{-1}(x)) = 1\}} = X.
\]
To see that this is a generalization, suppose that $j$ is irreducible. Then there exists a unique minimal closed set $K \subseteq Y$ such that $j(K) = X$, namely $K = Y$. By Theorem \ref{generalized Whyburn}, $\overline{X_1} = X$, where
\[
    X_1 = \{x \in X: \card(j^{-1}(x)) = 1\}.
\]
If
\[
    Y_1 = \{y \in Y: j^{-1}(j(y)) = y\},
\]
then $j(Y_1) = X_1$ and $j(\overline{Y_1}) = \overline{X_1} = X$. Thus $\overline{Y_1} = Y$, by irreducibility. So $j$ is almost one-to-one.\\

As the following example shows, Theorem \ref{generalized Whyburn} is a strict generalization of Whyburn's Theorem.

\begin{example}
Let
\[
    Y = ([0,1] \times \{0\}) \cup (\{1\} \times [0,1]) \subseteq \bbR^2,
\]
\[
    X = [0,1] \subseteq \bbR,
\]
and $j:Y \to X$ be defined by
\[
    j(x,y) = x.
\]
Then there exists a unique minimal closed set $K \subseteq Y$ such that $j(K) = X$, namely
\[
    K = [0,1] \times \{0\},
\]
and
\[
    \overline{\{x \in X: \card(j^{-1}(x)) = 1\}} = \overline{[0,1)} = [0,1] = X.
\]
Clearly $j$ is not irreducible. It is not almost one-to-one because
\[
    \overline{\{(x,y) \in Y: j^{-1}(j(x,y)) = (x,y)\}} = \overline{[0,1) \times \{0\}}
    = [0,1] \times \{0\} \subsetneq Y.
\]
\end{example}

When translated into the language of operator algebras, Theorem \ref{generalized Whyburn} says that for a unital inclusion $\A \subseteq \B$ of separable abelian $C^*$-algebras, there exists a unique \emph{pseudo-expectation} (in the sense of Pitts) if and only if the \emph{almost extension property} of Nagy-Reznikoff holds. (All these terms will be defined in Section \ref{opalg} below.) This answers a question posed to the author by Ruy Exel and Bartosz Kwa\'{s}niewski following his 2019 talk at the International Workshop on Operator Theory and its Applications in Lisbon, Portugal.\\

More generally, we prove (Theorem \ref{aperiodic}) that a unital inclusion $\A \subseteq \B$ of (not necessarily separable) abelian $C^*$-algebras has a unique pseudo-expectation if and only if the inclusion is \emph{aperiodic} (in the sense of Kwa\'{s}niewski and Meyer). Aperiodicity is weaker than the almost extension property \cite[Theorem 5.5]{KM2020}, but stronger than having a unique pseudo-expectation \cite[Theorem 3.6]{KM2020}. For separable $C^*$-inclusions, aperiodicity and the almost extension property coincide \cite[Theorem 5.5]{KM2020}. Thus Theorem \ref{generalized Whyburn} is actually a corollary of Theorem \ref{aperiodic}, modulo a non-trivial operator-algebraic result, as illustrated in the diagram below:
\[
\boxed{
\xymatrix{
    \text{almost extension property} \ar@{=>}[r]^{\text{\cite[Thm. 5.5]{KM2020}}} & \text{aperiodicity} \ar@{=>}[r]^{\text{\cite[Thm. 3.6]{KM2020}}} \ar@/^2pc/@{=>}[l]^(0.5){\text{separable \cite[Thm 5.5]{KM2020}}} & \text{unique pseudo-expectation} \ar@/^2pc/@{=>}[l]^{\text{abelian (Thm. \ref{aperiodic})}} \ar@/_2pc/@{=>}[ll]_{\text{separable abelian (Thm. \ref{generalized Whyburn})}}
}}
\]
Nonetheless, we include a direct, purely-topological proof of Theorem \ref{generalized Whyburn} that does not rely on Theorem \ref{aperiodic} nor on \cite[Theorem 5.5]{KM2020}.\\

\textbf{Acknowledgements:} We thank Alexis Alevras and Marcos Valdes for helpful discussions about this material.

\section{Preliminaries}

Let $j:Y \to X$ be a continuous surjection of compact Hausdorff spaces. In this section we show that there must exist a minimal closed set $K \subseteq Y$ such that $j(K) = X$ (Proposition \ref{minimal exists}) and we establish a criterion for determining when $K$ is unique (Proposition \ref{unique minimal}).

\begin{proposition} \label{minimal exists}
Let $j:Y \to X$ be a continuous surjection of compact Hausdorff spaces. Then there exists a minimal closed set $K \subseteq Y$ such that $j(K) = X$.
\end{proposition}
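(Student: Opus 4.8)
The plan is to apply Zorn's Lemma to the collection $\mathcal{K}$ of all closed subsets $F \subseteq Y$ with $j(F) = X$, ordered by reverse inclusion, and show that every chain has an upper bound (i.e.\ that nested such sets have nonempty intersection that still maps onto $X$). The collection is nonempty since $Y$ itself belongs to it, so once the chain condition is verified, Zorn's Lemma produces a minimal element $K$, which is exactly what we want.

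First I would fix a chain $\mathcal{C} \subseteq \mathcal{K}$ (a totally ordered subfamily of closed sets each mapping onto $X$) and set $K_0 = \bigcap_{F \in \mathcal{C}} F$. Since $Y$ is compact Hausdorff and each $F$ is closed, $K_0$ is closed; I must show $j(K_0) = X$. Fix $x \in X$. For each $F \in \mathcal{C}$, the fiber-trace $j^{-1}(x) \cap F$ is a nonempty (because $j(F) = X$) closed subset of the compact space $Y$, and these sets are totally ordered by inclusion as $F$ ranges over $\mathcal{C}$, hence have the finite intersection property. By compactness of $Y$, $\bigcap_{F \in \mathcal{C}} \bigl(j^{-1}(x) \cap F\bigr) = j^{-1}(x) \cap K_0 \neq \varnothing$, so $x \in j(K_0)$. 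As $x$ was arbitrary, $j(K_0) = X$, so $K_0 \in \mathcal{K}$ and is an upper bound for $\mathcal{C}$ in the reverse-inclusion order. Zorn's Lemma then yields a minimal element $K$ of $\mathcal{K}$.

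The only point requiring care — and the closest thing to an obstacle — is the finite-intersection / compactness argument: one should be careful to intersect the fibers with the chain members first (so each piece is automatically nonempty) rather than intersecting the $F$'s and then taking a fiber, and to note that a nested family of nonempty closed subsets of a compact space has nonempty intersection. Everything else is a routine verification that reverse inclusion is a partial order and that $Y \in \mathcal{K}$. I would also remark that compact Hausdorff (in fact just compact) is exactly what makes this work; the same proof shows more generally that the intersection of \emph{any} chain in $\mathcal{K}$ lies in $\mathcal{K}$, which is all Zorn's Lemma needs.
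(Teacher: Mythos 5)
Your proposal is correct and follows essentially the same route as the paper: Zorn's Lemma applied to the family of closed sets mapping onto $X$ under reverse inclusion, with the chain condition verified by intersecting each fiber $j^{-1}(x)$ with the chain members and invoking compactness of $Y$ via the finite intersection property. No gaps; the extra remarks (Hausdorffness is not needed, only compactness) are accurate.
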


\begin{proof}
Let
\[
    \F = \{F \subseteq Y: \text{$F$ is closed and $j(F) = X$}\},
\]
a nonempty set partially ordered by reverse inclusion. Suppose $\L \subseteq \F$ is a nonempty totally ordered subset and define
\[
    F_0 = \bigcap\{F: F \in \L\}.
\]
If $x \in X$, then
\[
    \{F \cap j^{-1}(x): F \in \L\}
\]
is a collection of closed subsets of $Y$ with the finite intersection property. Since $Y$ is compact,
\[
    F_0 \cap j^{-1}(x) = \bigcap\{F \cap j^{-1}(x): F \in \L\} \neq \emptyset.
\]
Since $x$ was arbitrary, $j(F_0) = X$. So $F_0 \in \F$ is an upper bound for $\L$. By Zorn's Lemma, $\F$ has a maximal element.
\end{proof}

\begin{proposition} \label{unique minimal}
Let $j:Y \to X$ be a continuous surjection of compact Hausdorff spaces. Then the following are equivalent:
\begin{enumerate}
\item[i.] There exists a unique minimal closed set $K \subseteq Y$ such that $j(K) = X$.
\item[ii.] If $F_0 = \bigcap\{F \subseteq Y: \text{$F$ is closed and $j(F) = X$}\}$, then $j(F_0) = X$.
\end{enumerate}
\end{proposition}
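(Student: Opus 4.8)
The plan is to prove the two implications separately, with the bulk of the work in (i) $\Rightarrow$ (ii). For (ii) $\Rightarrow$ (i), observe that if $j(F_0) = X$, then $F_0$ belongs to the family $\F = \{F \subseteq Y : F \text{ closed and } j(F) = X\}$; since $F_0 \subseteq F$ for every $F \in \F$ by its very definition, $F_0$ is the least element of $\F$ under inclusion, hence the unique minimal closed set with $j(F_0) = X$. This step is essentially immediate.

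For (i) $\Rightarrow$ (ii), let $K$ be the unique minimal closed set with $j(K) = X$. The key claim is that $K \subseteq F$ for \emph{every} $F \in \F$. To prove this, I would fix $F \in \F$ and rerun the Zorn's Lemma argument of Proposition \ref{minimal exists}, but this time inside the nonempty family $\F_F = \{G \subseteq F : G \text{ closed and } j(G) = X\}$ partially ordered by reverse inclusion: a nonempty chain $\L \subseteq \F_F$ has intersection $G_0 = \bigcap \L$, which is closed, contained in $F$, and satisfies $j(G_0) = X$ by exactly the finite-intersection-property-plus-compactness argument already used (for each $x \in X$, the sets $\{G \cap j^{-1}(x) : G \in \L\}$ form a centered family of closed sets in the compact space $Y$). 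Zorn's Lemma then produces a minimal element $K'$ of $\F_F$, i.e.\ a closed subset $K' \subseteq F$ with $j(K') = X$ and no proper closed subset of $K'$ mapping onto $X$.

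The crucial observation — and the point where one must be a little careful — is that minimality is intrinsic: $K'$ being minimal in $\F_F$ means precisely that $K'$ is a minimal closed set with $j(K') = X$ in the sense of (i), since this condition refers only to the closed subsets of $K'$ itself and not to the ambient set $F$. Hence uniqueness forces $K' = K$, and since $K' \subseteq F$ we conclude $K \subseteq F$, proving the claim. Finally, the claim gives $K \subseteq \bigcap\{F : F \in \F\} = F_0$, while $F_0 \subseteq K$ because $K \in \F$; therefore $F_0 = K$ and $j(F_0) = j(K) = X$, which is (ii). The main (and only real) obstacle is recognizing that one should localize the existence argument of Proposition \ref{minimal exists} to an arbitrary $F \in \F$ and then invoke the \emph{intrinsic} nature of minimality to apply the uniqueness hypothesis; everything else is a reprise of arguments already in place.
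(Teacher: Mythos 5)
Your proof is correct and follows essentially the same route as the paper: show $K \subseteq F$ for every closed $F$ with $j(F)=X$, conclude $F_0 = K$, and for the converse note that $F_0$ is then the least (hence unique minimal) element of the family. The only difference is one of detail: the paper asserts "$K \subseteq F$" in a single sentence, while you explicitly justify it by localizing the Zorn's Lemma argument of Proposition \ref{minimal exists} to $F$ and observing that minimality is intrinsic -- a worthwhile clarification, since a unique minimal element of a poset need not be a least element in general.
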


\begin{proof}
Let
\[
    F_0 = \bigcap\{F \subseteq Y: \text{$F$ is closed and $j(F) = X$}\}.
\]
(i $\implies$ ii) Suppose there exists a unique minimal closed set $K \subseteq Y$ such that $j(K) = X$. If $F \subseteq Y$ is closed and $j(F) = X$, then $K \subseteq F$. Thus $K \subseteq F_0$. In fact, $F_0 = K$, since $F_0 \subseteq K$ by definition. Therefore $j(F_0) = j(K) = X$. (ii $\implies$ i) Conversely, suppose $j(F_0) = X$. Let $K \subseteq Y$ be a minimal closed set such that $j(K) = X$. Then $F_0 \subseteq K$ by definition and so $K = F_0$ by minimality.
\end{proof}

\section{Quasicontinuous Selections}

Let $j:Y \to X$ be a surjection. A \emph{selection} of $j$ is a function $\alpha:X \to Y$ such that $j \circ \alpha = \id_X$. It is well-known that continuous surjections of compact metric spaces have Borel selections (see \cite[Theorem 3.4.1.]{Arveson1976}, for example). On the other hand, a continuous selection is usually impossible. In this section, we draw attention to a theorem of Crannell, Frantz, and LeMasurier, which states that continuous surjections of compact metric spaces have \emph{quasicontinuous} selections. This result plays a critical role in our proof of Theorem \ref{generalized Whyburn} below.

\begin{definition}[quasiopen]
Let $Y$ be a topological space. We say that $A \subseteq Y$ is \textbf{quasiopen} if $A \subseteq \overline{A^\circ}$.
\end{definition}

\begin{definition}[quasicontinuous]
We say that a mapping $j:Y \to X$ of topological spaces is \textbf{quasicontinuous} if for every open set $U \subseteq X$, $j^{-1}(U) \subseteq Y$ is quasiopen.
\end{definition}

\begin{theorem}[\cite{CFL2005}, Corollary 5]
Let $j:Y \to X$ be a continuous surjection of compact metric spaces. Then $j$ admits a quasicontinuous selection $\alpha:X \to Y$.
\end{theorem}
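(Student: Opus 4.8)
The plan is to build the selection in three moves: first shrink $Y$ so that $j$ becomes irreducible, then invoke Whyburn's Theorem to produce a canonical \emph{continuous} selection over a dense subset of $X$, and finally take the closure of the graph of that selection and check that \emph{any} selection of the resulting multifunction is automatically quasicontinuous.

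For the reduction, apply Proposition \ref{minimal exists} to obtain a minimal closed set $K \subseteq Y$ with $j(K) = X$. Being a closed subspace of a compact metric space, $K$ is again compact metric; a selection of $j|_K : K \to X$ is, via the inclusion $K \hookrightarrow Y$, a quasicontinuous selection of $j$; and $j|_K$ is irreducible, since any closed $F \subsetneq K$ is closed in $Y$ and so $j(F) \subsetneq X$ by minimality of $K$. Hence we may assume $j$ itself is irreducible. Whyburn's Theorem \cite[Theorem 2]{Whyburn1939} then makes $j$ almost one-to-one, so $Y_1 := \{y \in Y : j^{-1}(j(y)) = \{y\}\}$ is dense in $Y$, and applying $j$ and using continuity and compactness, $X_1 := \{x \in X : \card(j^{-1}(x)) = 1\} = j(Y_1)$ is dense in $X$ (indeed $X = j(\overline{Y_1}) \subseteq \overline{j(Y_1)} = \overline{X_1}$). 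Over $X_1$ there is a unique selection $\alpha_1 : X_1 \to Y$, namely $\alpha_1(x) = $ the single point of $j^{-1}(x)$, and it is continuous: if $x_n \to x$ in $X_1$, any cluster point $y$ of $(\alpha_1(x_n))$ in the compact space $Y$ satisfies $j(y) = \lim x_n = x$, hence $y = \alpha_1(x)$, so $\alpha_1(x_n) \to \alpha_1(x)$.

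Next, set $\Gamma = \overline{\{(x, \alpha_1(x)) : x \in X_1\}} \subseteq X \times Y$ and, for $x \in X$, $\Phi(x) = \{y : (x,y) \in \Gamma\}$. Density of $X_1$ with compactness of $Y$ gives $\Phi(x) \neq \emptyset$; continuity of $j$ gives $\Phi(x) \subseteq j^{-1}(x)$; and continuity of $\alpha_1$ gives $\Phi(x) = \{\alpha_1(x)\}$ for $x \in X_1$. Choose any $\alpha : X \to Y$ with $\alpha(x) \in \Phi(x)$ for every $x$; then $\alpha$ is a selection of $j$ extending $\alpha_1$. Recall that $\alpha$ is quasicontinuous (in the preimage sense of the excerpt) if and only if for each $x_0 \in X$ and each pair of open sets $N \ni x_0$, $V \ni \alpha(x_0)$ there is a nonempty open $W \subseteq N$ with $\alpha(W) \subseteq V$. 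To verify this, use regularity of $Y$ to pick an open $V'$ with $\alpha(x_0) \in V' \subseteq \overline{V'} \subseteq V$; since $(x_0,\alpha(x_0)) \in \Gamma$ there is $x' \in X_1 \cap N$ with $\alpha_1(x') \in V'$; by continuity of $\alpha_1$ there is an open $W$ with $x' \in W \subseteq N$ and $\alpha_1(W \cap X_1) \subseteq V'$; and then for every $w \in W$ the point $\alpha(w) \in \Phi(w)$ is a limit of values $\alpha_1(x_k)$ with $x_k \to w$, hence eventually $x_k \in W \cap X_1$, so $\alpha(w) \in \overline{V'} \subseteq V$. Thus $\alpha(W) \subseteq V$ with $W \subseteq N$ nonempty open, which is what was required.

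I expect the crux to be the density of $X_1$, i.e.\ the existence of a dense set carrying a \emph{unique} (hence continuous) partial selection: a selection that is merely continuous on some dense set need not extend to a quasicontinuous selection, and what drives the argument is that uniqueness forces $\Phi$ to be single-valued on that dense set, which is exactly what the reduction to an irreducible map plus Whyburn's Theorem supplies. A minor point is the appearance of $\overline{V'}$ rather than $V$ upon passing to the graph closure, handled by shrinking $V$ at the start. One could instead seek a Whyburn-free argument, constructing $\alpha$ by a transfinite refinement over a countable base of $X$ and forcing the preimages of basic open subsets of $Y$ to be quasiopen; that is more self-contained but substantially more technical, and the route above is cleaner given that Whyburn's Theorem is already available.
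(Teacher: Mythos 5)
Your proposal is correct, but note that the paper offers no proof of this statement to compare against: it is imported verbatim from \cite{CFL2005} (Corollary 5) and then used as a black box in the proof of Theorem \ref{generalized Whyburn}. So what you have written is an independent proof, and it checks out. The reduction via Proposition \ref{minimal exists} is sound: $j|_K$ is an irreducible continuous surjection of compact metric spaces, and a quasicontinuous selection of $j|_K$ is automatically a quasicontinuous selection of $j$, since the preimage of an open $V \subseteq Y$ equals the preimage of the relatively open set $V \cap K$. Classical Whyburn \cite{Whyburn1939} then gives density of $Y_1$, hence of $X_1 = j(Y_1)$, and your verification that any selection through the multifunction $\Phi$ (the fibers of the closed set $\Gamma$) is quasicontinuous is correct. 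Two small remarks: the pointwise criterion you ``recall'' is equivalent to the paper's quasiopen-preimage definition, but since the paper only states the latter, that equivalence deserves a line (it is a two-line argument: one direction intersects $N$ with the interior of $\alpha^{-1}(V)$, the other shows every neighborhood of a point of $\alpha^{-1}(V)$ meets that interior); and $\Phi(x) = \{\alpha_1(x)\}$ on $X_1$ already follows from $\Phi(x) \subseteq j^{-1}(x)$ and the fiber being a singleton, so continuity of $\alpha_1$ is not needed there (it is needed, and correctly used, only in the final quasicontinuity check). There is also no circularity: Whyburn's 1939 theorem is proved independently of \cite{CFL2005} and of Theorem \ref{generalized Whyburn}, even though the paper's logical flow runs the other way (quasicontinuous selections $\Rightarrow$ generalized Whyburn).

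As a comparison of routes: the argument in \cite{CFL2005} works directly with closed relations in $X \times Y$, passing to a minimal closed relation with full projection and deducing quasicontinuity of its selections from minimality, without invoking Whyburn's theorem; your proof replaces that core step by citing Whyburn to get density of the singleton-fiber set over the minimal set $K$, then runs a graph-closure argument. What your route buys is brevity and transparency given that Whyburn's theorem is already on the table in this paper; what it costs is self-containedness, since the density-of-singleton-fibers step — the real content — is outsourced rather than derived, and it makes the classical theorem an ingredient of the selection theorem rather than (as in the paper's architecture) a consequence of the same circle of ideas.
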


\section{A Generalization of Whyburn's Theorem}

In this section we state and prove our main result, Theorem \ref{generalized Whyburn}. We start with an elementary lemma.

\begin{lemma} \label{USC}
Let $j:Y \to X$ be a continuous surjection of compact metric spaces. Then the mapping
\[
    X \to \bbR: x \mapsto \diam(j^{-1}(x))
\]
is upper semi-continuous.
\end{lemma}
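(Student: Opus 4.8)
The plan is to verify the definition of upper semi-continuity directly, namely that for each $c \in \bbR$ the sublevel set $\{x \in X : \diam(j^{-1}(x)) < c\}$ is open. So fix a compatible metric $d$ on $Y$, fix $x_0 \in X$ with $\diam(j^{-1}(x_0)) < c$, and choose a real number $c'$ with $\diam(j^{-1}(x_0)) < c' < c$.

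First I would form the closed (hence compact) set $D = \{(y,y') \in Y \times Y : d(y,y') \geq c'\}$ and push it forward: since $j \times j : Y \times Y \to X \times X$ is continuous, $C := (j \times j)(D)$ is compact, hence closed in $X \times X$. The key point is that $(x_0,x_0) \notin C$, for otherwise there would be $y,y' \in j^{-1}(x_0)$ with $d(y,y') \geq c'$, contradicting $\diam(j^{-1}(x_0)) < c'$. As $X \times X$ is metric and $C$ is closed, I can then choose an open neighborhood $U$ of $x_0$ in $X$ with $(U \times U) \cap C = \emptyset$. Now for any $x \in U$ and any $y,y' \in j^{-1}(x)$ we have $(j(y),j(y')) = (x,x) \in U \times U$, so $(j(y),j(y')) \notin C$, forcing $d(y,y') < c'$; taking the supremum over such $y,y'$ gives $\diam(j^{-1}(x)) \leq c' < c$. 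Hence $U \subseteq \{x \in X : \diam(j^{-1}(x)) < c\}$, and the sublevel set is open, as required.

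The argument uses only compactness of $Y$ together with the Hausdorff (indeed metric) structure of $X$; the only subtlety worth flagging is the insertion of the intermediate value $c'$, which is exactly what upgrades the bound from $\diam(j^{-1}(x)) \leq c$ to the strict inequality needed for openness of the sublevel set. I do not anticipate any genuine obstacle here. (A sequential variant works equally well: given $x_n \to x_0$, pick $y_n, y_n' \in j^{-1}(x_n)$ attaining the diameter — possible by compactness — extract convergent subsequences $y_n \to y$ and $y_n' \to y'$, and use continuity of $j$ to conclude $y,y' \in j^{-1}(x_0)$ with $d(y,y') = \lim_n \diam(j^{-1}(x_n))$ along that subsequence, whence $\limsup_n \diam(j^{-1}(x_n)) \leq \diam(j^{-1}(x_0))$.)
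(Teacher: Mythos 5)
Your main argument is correct, and it takes a genuinely different route from the paper. The paper argues sequentially: assuming $x_n \to x$ with $\diam(j^{-1}(x_n)) \geq \varepsilon$, it picks near-diametral pairs $y_n, y_n' \in j^{-1}(x_n)$ with $d(y_n,y_n') > \varepsilon - 1/n$, extracts convergent subsequences, and concludes $\diam(j^{-1}(x)) \geq \varepsilon$, i.e.\ the superlevel sets are (sequentially) closed. You instead show the sublevel sets $\{x : \diam(j^{-1}(x)) < c\}$ are open by pushing the closed set $D = \{(y,y') : d(y,y') \geq c'\}$ forward under $j \times j$ and separating $(x_0,x_0)$ from the compact (hence closed) image by a box neighborhood $U \times U$. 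Your version is sequence-free and in fact needs less from $X$: it only uses that $X$ is Hausdorff (so that the compact image is closed and a basic box neighborhood exists), whereas the paper's sequential argument implicitly relies on $X$ being metrizable (or at least sequential); both are of course available under the stated hypotheses. The insertion of the intermediate value $c'$ is exactly the right device to get strict inequality, and your observation that metrizability of $X$ is not really needed is a small bonus in generality. One minor remark on your parenthetical sequential variant: as written, the conclusion $\limsup_n \diam(j^{-1}(x_n)) \leq \diam(j^{-1}(x_0))$ requires first passing to a subsequence along which the $\limsup$ is realized before extracting the convergent pairs (or arguing as the paper does with a fixed $\varepsilon$); this is a cosmetic point and does not affect your primary proof, which is complete as it stands.
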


\begin{proof}
Let $\varepsilon > 0$. Suppose that $x_n \to x$, where $\diam(j^{-1}(x_n)) \geq \varepsilon$ for all $n \in \bbN$. For each $n \in \bbN$, there exist $y_n, y_n' \in j^{-1}(x_n)$ such that $d(y_n,y_n') > \varepsilon-1/n$. Passing to a subsequence, if necessary, we may assume that $y_n \to y$ and $y_n' \to y'$. Then $y, y' \in j^{-1}(x)$ and $d(y,y') \geq \varepsilon$. Thus $\diam(j^{-1}(x)) \geq \varepsilon$.
\end{proof}

\begin{theorem} \label{generalized Whyburn}
Let $j:Y \to X$ be a continuous surjection of compact metric spaces. Then the following are equivalent:
\begin{enumerate}
\item[i.] There exists a unique minimal closed set $K \subseteq Y$ such that $j(K) = X$.
\item[ii.] $\{x \in X: \card(j^{-1}(x)) = 1\}$ is dense in $X$.
\end{enumerate}
\end{theorem}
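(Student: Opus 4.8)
The plan is to prove the two implications separately, using Proposition \ref{unique minimal} to restate condition (i) in terms of the set $F_0 = \bigcap\{F \subseteq Y : F \text{ closed}, j(F) = X\}$: condition (i) holds iff $j(F_0) = X$.

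\textbf{(ii $\implies$ i).} Assume $X_1 := \{x \in X : \card(j^{-1}(x)) = 1\}$ is dense in $X$. I want to show $j(F_0) = X$. The key observation is that if $x \in X_1$, then the single point of $j^{-1}(x)$ must lie in \emph{every} $F \in \F$, hence in $F_0$. So $j^{-1}(X_1) \subseteq F_0$, and therefore $X_1 \subseteq j(F_0)$. Now $j(F_0)$ is closed (continuous image of a compact set) and contains the dense set $X_1$, so $j(F_0) = X$. This direction is straightforward.

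\textbf{(i $\implies$ ii).} This is the substantive direction, and it is where the quasicontinuous selection enters. Suppose $X_1$ is \emph{not} dense; then there is a nonempty open set $U \subseteq X$ with $U \cap X_1 = \emptyset$, i.e. $\card(j^{-1}(x)) \geq 2$, equivalently $\diam(j^{-1}(x)) > 0$, for all $x \in U$. By Lemma \ref{USC} and a Baire category argument on $U$ (which is a Baire space, being open in a compact metric space), the sets $\{x \in U : \diam(j^{-1}(x)) \geq 1/n\}$ are closed in $U$ and cover $U$, so one of them has nonempty interior: there is a nonempty open $V \subseteq U$ and $\varepsilon > 0$ with $\diam(j^{-1}(x)) \geq \varepsilon$ for all $x \in V$. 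Now invoke the Crannell--Frantz--LeMasurier theorem to get a quasicontinuous selection $\alpha : X \to Y$. The idea is to use $\alpha$ to carve a proper closed subset out of $Y$ that still surjects onto $X$, contradicting $j(F_0) = X$ (more precisely, contradicting minimality/uniqueness). Concretely, I would consider $\overline{\alpha(V)}$: since $\alpha$ is quasicontinuous, $\alpha^{-1}$ of an open set is quasiopen, and one can show $\overline{\alpha(V)}$ is a "small" closed set over $V$ — roughly, $\alpha(V)$ avoids points whose $j$-fibers are large in the complementary direction. The goal is to produce a closed set $F \subsetneq Y$ with $j(F) = X$ that is not $F_0$, or to show $F_0$ itself fails to surject, by exhibiting over each point of $V$ a choice of fiber-point that can be consistently removed.

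\textbf{Main obstacle.} The crux is turning the quasicontinuous selection over $V$ into a genuinely \emph{proper} closed subset $F$ with $j(F) = X$ while keeping track of what happens outside $V$ and at the boundary $\partial V$. The quasicontinuity is exactly the hypothesis that lets one replace $\alpha$ by its closure-of-graph-type modification without destroying surjectivity, but verifying that the resulting closed set (i) still hits every fiber and (ii) genuinely omits a point of some fiber over $V$ — so that two \emph{distinct} minimal closed surjecting sets can be extracted, or equivalently $F_0$ is shown not to surject — is the delicate part. I expect the argument to run: let $W = \overline{\alpha(V)}$; show $j(W) \supseteq \overline{V}$ using quasicontinuity (the graph of a quasicontinuous selection has closure projecting onto the closure of the domain), so that $W \cup j^{-1}(X \setminus V)$ is closed and surjects onto $X$; then show that for a suitable point $x_0 \in V$ one can also build a \emph{different} such set missing $\alpha(x_0)$, yielding two distinct minimal closed surjecting subsets and contradicting (i). Handling the possibility that these two constructions collapse to the same minimal set — which would happen only if the removed fiber-portion were forced back in — will require the lower bound $\diam(j^{-1}(x)) \geq \varepsilon$ on $V$, ensuring genuine room to make two incompatible selections.
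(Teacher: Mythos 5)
Your (ii $\implies$ i) direction is complete and correct, and is essentially the paper's argument (the paper takes $K = \overline{j^{-1}(X_1)}$ directly rather than routing through $F_0$, but the content is the same). The gap is in (i $\implies$ ii), and it is precisely the step you flag as the ``main obstacle'': the construction is not carried out, and the sketch you give does not use quasicontinuity where it is actually needed. Your observation that $j(\overline{\alpha(V)}) \supseteq \overline{V}$ is true for \emph{any} selection (it follows from compactness and $j(\alpha(V)) = V$), so nothing about quasicontinuity is being exploited there; and the set $W = \overline{\alpha(V)}$ comes with no control on how it sits inside the fibers, so you cannot separate a second, ``parallel'' cross-section from it. Moreover, the intended conclusion ``build a different such set missing $\alpha(x_0)$, yielding two distinct minimal closed surjecting subsets'' does not follow: the unique minimal set $K$ might simply not contain $\alpha(x_0)$, so exhibiting one closed surjecting set that omits $\alpha(x_0)$ contradicts nothing.

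The missing idea is how quasicontinuity localizes the selection. Fix $x_0$ in your set $V$ (where $\diam(j^{-1}(x)) \geq \varepsilon$) and a small ball $B_\delta(\alpha(x_0)) \subseteq j^{-1}(V)$. Quasicontinuity says $\alpha^{-1}(B_\delta(\alpha(x_0)))$ is quasiopen and contains $x_0$, hence has nonempty interior $U$; thus there is a set $A \subseteq B_\delta(\alpha(x_0)) \cap \alpha(X)$ with $j(A) = U$ and $\diam(A) \leq 2\delta$. This is the crucial output: a partial cross-section of uniformly small diameter over an entire nonempty open set $U$. Since every fiber over $U$ has diameter $\geq \varepsilon$, each $y \in A$ admits $y'$ in the same fiber with $d(y',A) \geq \varepsilon/4 - 2\delta$, so for $\delta$ small the set $A' = \{y' : y \in A\}$ satisfies $j(A') = U$ and $\overline{A} \cap \overline{A'} = \emptyset$. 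Then $F = \overline{A} \cup j^{-1}(U^c)$ and $F' = \overline{A'} \cup j^{-1}(U^c)$ are closed with $j(F) = j(F') = X$ but $j(F \cap F') = U^c \subsetneq X$; since $F_0 \subseteq F \cap F'$, this contradicts Proposition \ref{unique minimal}. This disjoint-pair argument, rather than ``one set missing $\alpha(x_0)$,'' is what actually refutes uniqueness, and it is exactly the step your proposal leaves open. (Your preliminary Baire-category reduction to find $V$ and $\varepsilon$ is fine and is a harmless rearrangement of the paper's argument, which instead fixes $\varepsilon$, shows $\{x : \diam(j^{-1}(x)) < \varepsilon\}$ is open and dense using Lemma \ref{USC}, and applies Baire at the end.)
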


\begin{proof}
(i $\implies$ ii) Suppose there exists a unique minimal closed set $K \subseteq Y$ such that $j(K) = X$. Let $\varepsilon > 0$. Assume that $\{x \in X: \diam(j^{-1}(x)) < \varepsilon\}$ is not dense in $X$. Then there exists $x_0 \in X$ and $r > 0$ such that
\[
    B_r(x_0) \subseteq \{x \in X: \diam(j^{-1}(x)) \geq \varepsilon\}.
\]
Let $\alpha:X \to Y$ be a quasicontinuous selection of $j$. Then $\alpha(x_0) \in j^{-1}(B_r(x_0))$ and so there exists $\delta > 0$ such that $B_\delta(\alpha(x_0)) \subseteq j^{-1}(B_r(x_0))$. Then $\alpha^{-1}(B_\delta(\alpha(x_0))$ is a quasiopen set containing $x_0$. Thus
\[
    U = \alpha^{-1}(B_\delta(\alpha(x_0)))^\circ
\]
is a nonempty open subset of $X$. Now
\[
    \alpha^{-1}(B_\delta(\alpha(x_0))) = j(B_\delta(\alpha(x_0)) \cap \alpha(X)),
\]
and so there exists $A \subseteq B_\delta(\alpha(x_0)) \cap \alpha(X)$ such that $j(A) = U$. In particular, $\diam(A) \leq 2\delta$. On the other hand,
\[
    U = j(A) \subseteq j(B_\delta(\alpha(x_0))) \subseteq B_r(x_0) \subseteq \{x \in X: \diam(j^{-1}(x)) \geq \varepsilon\}.
\]
Thus for every $y \in A$, there exists $y' \in Y$ such that $j(y') = j(y)$ and $d(y',y) > \varepsilon/4$. It follows that $d(y',A) \geq \varepsilon/4-2\delta$. Setting $A' = \{y': y \in A\}$, we see that $j(A') = U$ and $d(A',A) \geq \varepsilon/4-2\delta$. Choosing $\delta < \varepsilon/16$, we conclude that $d(A',A) > \varepsilon/8$, which implies $\overline{A} \cap \overline{A'} = \emptyset$. But then $F = \overline{A} \cup j^{-1}(U^c)$ and $F' = \overline{A'} \cup j^{-1}(U^c)$ are closed sets such that $j(F) = X$, $j(F') = X$, and $j(F \cap F') = U^c \subsetneq X$, which contradicts Proposition \ref{unique minimal}. By Lemma \ref{USC}, $\{x \in X: \diam(j^{-1}(x)) < \varepsilon\}$ is an open dense subset of $X$. Thus
\[
    \{x \in X: \card(j^{-1}(x)) = 1\}
    = \bigcap_{n=1}^\infty \{x \in X: \diam(j^{-1}(x)) < 1/n\}
\]
is dense in $X$.

(ii $\implies$ i) Suppose $X_1 = \{x \in X: \card(j^{-1}(x)) = 1\}$ is dense in $X$. Define $K = \overline{j^{-1}(X_1)}$. If $F \subseteq Y$ is closed and $j(F) = X$, then $j^{-1}(X_1) \subseteq F$, which implies $K \subseteq F$. Since
\[
    X = \overline{X_1} = \overline{j(j^{-1}(X_1))} \subseteq \overline{j(K)} = j(K),
\]
we see that $K$ is the unique minimal closed subset of $Y$ such that $j(K) = X$.
\end{proof}

\section{The Split Interval}

In Whyburn's theorem, the assumption of metrizability cannot be omitted \cite[Exercise 3.1.C]{Engelking1977}. Of course, the same is true for Theorem \ref{generalized Whyburn}, as the following example illustrates.

\begin{example} \label{split interval}
Let $\ddot{\bbI}$ be the ``split interval'', i.e., the set
\[
    ((0,1] \times \{0\}) \cup ([0,1) \times \{1\}) \subseteq \bbR^2
\]
equipped with the order topology arising from the lexicographic order
\[
    (x,y) < (x',y') \iff (x < x') \vee ((x = x') \wedge (y < y')).
\]
Let $\bbI = [0,1]$ be the unit interval equipped with its usual topology. Then $j:\ddot{\bbI} \to \bbI$ defined by
\[
    j(x,y) = x
\]
is an irreducible continuous surjection of compact Hausdorff spaces, but
\[
    \{x \in \bbI: \card(j^{-1}(x)) = 1\} = \{0, 1\},
\]
which is not dense in $\bbI$. Of course, $\ddot{\bbI}$ is not metrizable (it is not second countable).
\end{example}

\section{Aperiodicity for Abelian $C^*$-Inclusions} \label{opalg}

Let $\A \subseteq \B$ be a unital inclusion of $C^*$-algebras (cf. \cite{Arveson1976}). Then $\A \subseteq \B$ has the \emph{pure extension property} (PEP) if every pure state on $\A$ extends uniquely to a pure state on $\B$. A useful relaxation of the PEP is the \emph{almost extension property} (AEP) of Nagy and Reznikoff \cite{NagyReznikoff2014}. Instead of insisting that every pure state on $\A$ extends uniquely to a pure state on $\B$, the requirement is that a weak-* dense collection of pure states on $\A$ extends uniquely to pure states on $\B$. A \emph{conditional expectation} (CE) of $\B$ onto $\A$ is a completely positive map $E:\B \to \A$ such that $E|_{\A} = \id_{\A}$. A \emph{pseudo-expectation} (PsExp) for the $C^*$-inclusion $\A \subseteq \B$ is a completely positive map $\theta:\B \to I(\A)$ such that $\theta|_{\A} = \id_{\A}$ \cite[Definition 1.3]{Pitts2017}. Here $I(\A)$ is Hamana's \emph{injective envelope} of $\A$, the minimal injective operator system containing $\A$. Every conditional expectation is a pseudo-expectation, but there are $C^*$-inclusions with no conditional expectations, while pseudo-expectations always exist. We have the relations
\begin{equation} \label{implications}
    \text{PEP} \implies \text{AEP} \implies \text{$\exists!$ PsExp} \implies \text{$\exists$ at most one CE}.
\end{equation}
Only the middle implication is not immediate from the definitions.\\

It is somewhat surprising that the differences between the conditions in (\ref{implications}) above, in particular the difference between the almost extension property and having a unique pseudo-expectation, can be witnessed by abelian $C^*$-inclusions. Indeed, let $\A \subseteq \B$ be a unital inclusion of abelian $C^*$-algebras. By the Gelfand-Naimark Theorem, $\A \cong C(X)$, the continuous complex-valued functions on a compact Hausdorff space $X$ (cf. \cite[Theorem 1.7.3]{Arveson1976}). Likewise $\B \cong C(Y)$ for some compact Hausdorff space $Y$. Under these identifications, the inclusion map $\A \to \B$ corresponds to the map $C(X) \to C(Y): f \mapsto f \circ j$ for some continuous surjection $j:Y \to X$. It is easy to see that $\A \subseteq \B$ has the almost extension property if and only if
\[
    \overline{\{x \in X: \card(j^{-1}(x)) = 1\}} = X.
\]
By \cite[Corollary 3.21]{PittsZarikian2015}, $\A \subseteq \B$ has a unique pseudo-expectation if and only if there exists a unique minimal closed set $K \subseteq Y$ such that $j(K) = X$. Thus the abelian $C^*$-inclusion $C(\bbI) \subseteq C(\ddot{\bbI})$ corresponding to the continuous surjection $j:\ddot{\bbI} \to \bbI$ described in Example \ref{split interval} above has a unique pseudo-expectation, but not the almost extension property. On the other hand, by Theorem \ref{generalized Whyburn}, for an abelian $C^*$-inclusion $C(X) \subseteq C(Y)$ with $Y$ compact metric, the almost extension property and having a unique pseudo-expectation are equivalent. The metrizability of $Y$ corresponds precisely to the separability of $C(Y)$, and so Theorem \ref{generalized Whyburn} says that for separable abelian $C^*$-inclusions $\A \subseteq \B$, the almost extension property is equivalent to having a unique pseudo-expectation.\\

Recently, Kw\'{a}sniewski and Meyer substantially advanced our understanding of the relationship between the almost extension property and having a unique pseudo-expectation. Namely they introduced the notion of \emph{aperiodicity} for (arbitrary) $C^*$-inclusions \cite[Definition 5.14]{KM2019}, proved in \cite[Theorems 5.5 and 3.6]{KM2020} that
\begin{equation}
    \text{AEP} \implies \text{aperiodicity} \implies \text{$\exists!$ PsExp},
\end{equation}
and showed that for separable $C^*$-inclusions, the almost extension property is equivalent to aperiodicity \cite[Theorem 5.5]{KM2020}. In Theorem \ref{aperiodic} below, we prove that for abelian $C^*$-inclusions, aperiodicity is equivalent to having a unique pseudo-expectation. Combined with \cite[Theorem 5.5]{KM2020}, this gives another proof of Theorem \ref{generalized Whyburn} above, our generalization of Whyburn's Theorem.\\

For a $C^*$-algebra $\A$ we denote by $\A_1^+$ the positive norm-one elements of $\A$, and by $\bbH(\A)$ the non-zero hereditary subalgebras of $\A$.

\begin{definition}[aperiodic bimodules and $C^*$-inclusions]
Let $\A$ be a unital $C^*$-algebra. Following \cite{KM2019}:
\begin{itemize}
\item We say that a normed $\A$-bimodule $\X$ is \textbf{aperiodic} if for all $x \in \X$, $\varepsilon > 0$, and $\D \in \bbH(\A)$, there exists $d \in \D_1^+$ such that
\[
    \|dxd\| < \varepsilon.
\]
\item We say that an inclusion $\A \subseteq \B$ of unital $C^*$-algebras is \textbf{aperiodic} if $\B/\A$ is aperiodic as a normed $\A$-module. Equivalently, if for all $b \in \B$, $\varepsilon > 0$, and $\D \in \bbH(\A)$, there exists $d \in \D_1^+$ and $a \in \A$ such that
\[
    \|dbd - a\| < \varepsilon.
\]
\end{itemize}
\end{definition}

\begin{theorem} \label{aperiodic}
For an abelian $C^*$-inclusion $\A \subseteq \B$, aperiodicity is equivalent to having a unique pseudo-expectation.
\end{theorem}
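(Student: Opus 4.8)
The plan is to pass through Gelfand duality to the topological picture of Sections 2--4. Write $\A \cong C(X)$ and $\B \cong C(Y)$ for compact Hausdorff spaces, so the inclusion is induced by a continuous surjection $j \colon Y \to X$; a non-zero hereditary subalgebra $\D \in \bbH(\A)$ corresponds to the ideal $C_0(V) = \{f \in C(X) : f|_{X \setminus V} = 0\}$ of a non-empty open set $V \subseteq X$, and $dbd = (d \circ j)^2\, b$ in $C(Y)$ for $d \in C_0(V)$, $b \in C(Y)$. The implication ``aperiodicity $\implies$ unique pseudo-expectation'' holds for arbitrary $C^*$-inclusions by \cite[Theorem 3.6]{KM2020}, so the only thing to prove is the converse, which I would do by contraposition: assuming $\A \subseteq \B$ is \emph{not} aperiodic, I will produce two closed subsets of $Y$ mapping onto $X$ whose intersection does not, which by Proposition \ref{unique minimal} and \cite[Corollary 3.21]{PittsZarikian2015} shows there is no unique pseudo-expectation.

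So fix $b \in C(Y)$, $\varepsilon > 0$, and a non-empty open $V_0 \subseteq X$ witnessing the failure of aperiodicity, i.e.\ $\|dbd - a\| \ge \varepsilon$ for all $d \in C_0(V_0)$ with $0 \le d \le \|d\| = 1$ and all $a \in C(X)$. The first step, which I expect to be the main obstacle, is to convert this into a statement about fibres: I claim $\inf_{x \in V_0} \diam(b(j^{-1}(x))) =: 3\delta > 0$. Suppose not. The map $x \mapsto \diam(b(j^{-1}(x)))$ is upper semi-continuous (the proof of Lemma \ref{USC} works for arbitrary compact Hausdorff spaces if one argues with nets), so I can choose $x_0 \in V_0$ and an open $V'$ with $x_0 \in V' \subseteq \overline{V'} \subseteq V_0$ on which this diameter is $< \varepsilon/4$, together with a Urysohn function $d$ with $d(x_0) = 1$, $0 \le d \le 1$, $d \equiv 0$ off $V'$, so $d \in C_0(V_0)$. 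Then $c := dbd$ vanishes off $j^{-1}(\overline{V'})$ and $\sup_x \diam(c(j^{-1}(x))) \le \varepsilon/4$. Applying to $\operatorname{Re} c$ and $\operatorname{Im} c$ that the fibrewise maximum $x \mapsto \max_{j^{-1}(x)}(\cdot)$ is upper semi-continuous and the fibrewise minimum is lower semi-continuous (here $j$ is a closed map), and that these differ pointwise by at most $\varepsilon/4$, the Kat\v{e}tov--Tong insertion theorem ($X$ is normal) produces $a \in C(X)$ with $\|c - a \circ j\| < \varepsilon$, i.e.\ $\|dbd - a\| < \varepsilon$, a contradiction. (In effect this step says the distance from $c$ to the fibre-constant subalgebra $\A \subseteq C(Y)$ is controlled by $\sup_x \diam(c(j^{-1}(x)))$.)

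Next I would refine $V_0$ by a Baire category argument. For each Gaussian rational $\xi \in \bbQ[i]$ set $C_\xi = b^{-1}(\{z : |z - \xi| \le \delta/2\})$ and $D_\xi = b^{-1}(\{z : |z - \xi| \ge 5\delta/2\})$; these are closed, hence so are $j(C_\xi)$ and $j(D_\xi)$. A triangle-inequality argument gives $V_0 \subseteq \bigcup_{\xi}\big(j(C_\xi) \cap j(D_\xi)\big)$: for $x \in V_0$ pick $y, y' \in j^{-1}(x)$ with $|b(y) - b(y')| \ge 3\delta$ and then $\xi$ within $\delta/2$ of $b(y)$. Since $V_0$ is open in a compact Hausdorff space it is a Baire space, so some $j(C_\xi) \cap j(D_\xi)$ has non-empty interior; let $V_1$ be a non-empty set, open in $X$, with $V_1 \subseteq V_0 \cap j(C_\xi) \cap j(D_\xi)$ for this $\xi$. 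Then every fibre of $j$ over $V_1$ meets both $C_\xi$ and $D_\xi$.

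Finally the construction. With $\xi$ as above, put $P = \overline{\{y : |b(y) - \xi| > 2\delta\}}$ and $Q = \overline{\{y : |b(y) - \xi| < \delta\}}$, so $P \subseteq \{|b - \xi| \ge 2\delta\}$, $Q \subseteq \{|b - \xi| \le \delta\}$, whence $P \cap Q = \emptyset$. Since $C_\xi \subseteq \{|b - \xi| < \delta\}$ and $D_\xi \subseteq \{|b - \xi| > 2\delta\}$, every fibre over $V_1$ meets $\{|b - \xi| < \delta\}$ and $\{|b - \xi| > 2\delta\}$, so $V_1 \subseteq j(P) \cap j(Q)$. Now set $F = P \cup j^{-1}(X \setminus V_1)$ and $F' = Q \cup j^{-1}(X \setminus V_1)$. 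These are closed, $j(F) = j(F') = X$ (as $V_1 \subseteq j(P) \cap j(Q)$), but $F \cap F' = j^{-1}(X \setminus V_1)$ (as $P \cap Q = \emptyset$), so $j(F \cap F') = X \setminus V_1 \subsetneq X$. With $F_0 = \bigcap\{G \subseteq Y : G \text{ closed},\ j(G) = X\}$ as in Proposition \ref{unique minimal}, we get $F_0 \subseteq F \cap F'$, hence $j(F_0) \subsetneq X$; by Proposition \ref{unique minimal} there is no unique minimal closed set mapping onto $X$, and so by \cite[Corollary 3.21]{PittsZarikian2015} the inclusion has no unique pseudo-expectation. This proves the contrapositive, and together with \cite[Theorem 3.6]{KM2020} it finishes the proof. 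The Baire refinement and the two-closed-set construction are routine; the crux is the passage from the failure of aperiodicity to a non-empty open set over which all fibres of $b$ have diameter bounded away from $0$, which genuinely needs an insertion/approximation result such as Kat\v{e}tov--Tong.
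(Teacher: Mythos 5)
Your proposal is correct, but it takes a genuinely different route from the paper. The paper proves the hard direction (unique pseudo-expectation $\implies$ aperiodicity) structurally: the unique pseudo-expectation $E$ is a $*$-homomorphism whose kernel $\J$ is the unique maximal $\A$-disjoint ideal of $\B$ \cite[Corollary 3.21]{PittsZarikian2015}, and aperiodicity of $\B/\A$ is deduced from the exact sequence $(\J+\A)/\A \to \B/\A \to \B/(\J+\A)$ together with the permanence result \cite[Lemma 5.12]{KM2019}, the aperiodicity of $\A \subseteq I(\A)$ \cite[Remark 3.17]{KM2020}, and a short Urysohn argument for $\J$ using the unique minimal closed set $K$. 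You instead prove the contrapositive entirely at the topological level: from a failure of aperiodicity you extract $b$, $\varepsilon$, and a nonempty open $V_0$ over which the fibre images $b(j^{-1}(x))$ have diameter uniformly bounded below --- the key quantitative step, which works because the fibrewise max/min of $\operatorname{Re}(dbd)$ and $\operatorname{Im}(dbd)$ are upper/lower semicontinuous ($j$ being a closed map) and Kat\v{e}tov--Tong insertion applies once you rewrite the inequality in the required order (u.s.c.\ $M-\varepsilon/4 \leq m$ l.s.c.); then a Baire category argument whose countability is supplied by $\bbQ[i]$ in the range of $b$, so no separability of $X$ or $Y$ is needed; and finally two closed sets $F, F'$ with $j(F)=j(F')=X$ but $j(F \cap F') \neq X$, which kills uniqueness of the minimal closed set by Proposition \ref{unique minimal} and hence uniqueness of the pseudo-expectation by \cite[Corollary 3.21]{PittsZarikian2015}. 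Each approach has its merits: the paper's argument is shorter given the cited module machinery and exhibits the role of $I(\A)$ and the maximal $\A$-disjoint ideal, while yours avoids the injective envelope and the Kwa\'{s}niewski--Meyer module lemmas entirely (apart from the unavoidable citations of \cite[Theorem 3.6]{KM2020} for the easy direction and \cite[Corollary 3.21]{PittsZarikian2015} for the topological characterization), and it is in effect a non-metrizable upgrade of the paper's proof of Theorem \ref{generalized Whyburn}: the quasicontinuous selection and metric diameters of Lemma \ref{USC} are replaced by your insertion-plus-Baire localization, with fibres measured by a single function $b$ instead of a metric.
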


\begin{proof}
Let $\A \subseteq \B$ be an abelian $C^*$-inclusion with unique pseudo-expectation $E:\B \to I(\A)$. Then $E$ is a $*$-homomorphism and $\J = \ker(E)$ is the unique maximal $\A$-disjoint ideal of $\B$ \cite[Corollary 3.21]{PittsZarikian2015}. To show that $\A \subseteq \B$ is aperiodic, we must show that $\B/\A$ is aperiodic as a normed $\A$-module. Since the canonical maps
\[
    (\J+\A)/\A \to \B/\A \to \B/(\J+\A)
\]
form an exact sequence of normed $\A$-modules, it suffices to show that $(\J+\A)/\A \cong \J$ and $\B/(\J+\A) \cong (\B/\J)/\A$ are aperiodic as normed $\A$-modules \cite[Lemma 5.12]{KM2019}.\\

The aperiodicity of the normed $\A$-module $(\B/\J)/\A$ is equivalent to the aperiodicity of the $C^*$-inclusion $\A \subseteq \B/\J$. That, in turn, follows from the aperiodicity of the $C^*$-inclusion $\A \subseteq I(\A)$ \cite[Remark 3.17]{KM2020} and the fact that the induced map $\dot{E}:\B/\J \to I(\A)$ is a $*$-monomorphism.\\

To show that $\J$ is aperiodic as a normed $\A$-module, we identify the $C^*$-inclusion $\A \subseteq \B$ with the $C^*$-inclusion $C(X) \subseteq C(Y)$ arising from the continuous surjection $j:Y \to X$. Then
\[
    \J = \{g \in C(Y): g|_K = 0\},
\]
where $K \subseteq Y$ is the unique minimal closed set such that $j(K) = X$. Let $g \in \J$, $\varepsilon > 0$, and $\D \in \bbH(\A)$. Then
\[
    \D = \{f \in C(X): f|_F = 0\},
\]
where $F \subsetneq X$ is a closed set. Define $U = \{y \in Y: |g(y)| < \varepsilon\}$, an open subset of $Y$ containing $K$. Assume that $j(U^c) \cup F = X$. Then $K' = U^c \cup j^{-1}(F)$ is a closed subset of $Y$ such that $j(K') = X$. It follows that $K \subseteq K'$, which implies $K \subseteq j^{-1}(F)$, which in turn implies $j(K) \subseteq F$, a contradiction. So $j(U^c) \cup F \subsetneq X$. By Urysohn's Lemma, there exists $f \in C(X)_1^+$ such that $f|_{j(U^c) \cup F} = 0$. Then $f \in \D_1^+$ and $\|(f \circ j)g(f \circ j)\| < \varepsilon$, which completes the proof.
\end{proof}

\bibliographystyle{amsplain}

\begin{thebibliography}{99}

\bibitem{Arveson1976}
Arveson, William \emph{An invitation to $C^*$-algebras.} Graduate Texts in Mathematics, No. 39. Springer-Verlag, New York-Heidelberg, 1976. x+106 pp.

\bibitem{BOT2006}
Blokh, Alexander; Oversteegen, Lex; Tymchatyn, E. D. \emph{On almost one-to-one maps.} Trans. Amer. Math. Soc. 358 (2006), no. 11, 5003-5014.

\bibitem{CFL2005}
Crannell, Annalisa; Frantz, Marc; LeMasurier, Michelle \emph{Closed relations and equivalence classes of quasicontinuous functions.} Real Anal. Exchange 31 (2005/06), no. 2, 409-423.

\bibitem{Engelking1977}
Engelking, Ryszard \emph{General topology.} Translated from the Polish by the author. Monografie Matematyczne, Tom 60. [Mathematical Monographs, Vol. 60] PWN—Polish Scientific Publishers, Warsaw, 1977. 626 pp.

\bibitem{KST2001}
Kolyada, Sergi\u{\i}; Snoha, L'ubom\'{i}r; Trofimchuk, Serge\u{\i} \emph{Noninvertible minimal maps.} Fund. Math. 168 (2001), no. 2, 141-163.

\bibitem{KM2019}
Kwa\'{s}niewski, Bartosz; Meyer, Ralf \emph{Essential crossed products for inverse semigroup actions: simplicity and pure infiniteness.} arXiv:1906.06202v1 [math.OA] 14 Jun 2019.

\bibitem{KM2020}
Kwa\'{s}niewski, Bartosz; Meyer, Ralf \emph{Aperiodicity, the almost extension property, and uniqueness of pseudo-expectations.} arXiv:2007.05409v1 [math.OA] 10 Jul 2020.

\bibitem{NagyReznikoff2014}
Nagy, Gabriel; Reznikoff, Sarah \emph{Pseudo-diagonals and uniqueness theorems.} Proc. Amer. Math. Soc. 142 (2014), no. 1, 263-275.

\bibitem{Pitts2017}
Pitts, David R. \emph{Structure for regular inclusions. I.} J. Operator Theory 78 (2017), no. 2, 357-416.

\bibitem{PittsZarikian2015}
Pitts, David R.; Zarikian, Vrej \textit{Unique pseudo-expectations for $C^*$-inclusions}, Illinois J. Math. \textbf{59} (2015), no. 2, 449--483.

\bibitem{Whyburn1939}
Whyburn, G. T. \emph{On irreducibility of transformations.} Amer. J. Math. 61 (1939), 820-822.

\end{thebibliography}

\end{document}